\theoremstyle{plain}
 \newtheorem{theorem}{Theorem}%[section]
\newtheorem{proposition}[theorem]{Proposition}
\newtheorem{lemma}[theorem]{Lemma}
\newtheorem{definition}[theorem]{Definition}
\theoremstyle{remark}
\def\dom{\mathop{dom}}
\author{Alan Dow}
\address{UNC Charlotte, Dept. of Mathematics and Statistics}
\email{adow@charlotte.edu}
\author[I. Juh\'asz]{Istv\'an Juh\'asz}
\address      {HUN-REN Alfr\'ed Rényi Institute of Mathematics}
\email{juhasz@renyi.hu}
\subjclass[2020]{03E05, 03E10, 03E35, 54A25, 54A35, 54D45}
\keywords{weight, net weight, Cohen real, stationary set, elementary submodel}
\title[weight and net weight]{Weight, net weight, and elementary submodels}
\begin{document}

\begin{abstract}
In this note we prove several theorems that are
related to some results and problems from \cite{JSSz}.

We answer two of the main problems that were
raised in \cite{JSSz}.
First we give a ZFC example of a {\em Hausdorff} space in $C(\omega_1)$ that has uncountable net weight.
Then we prove that after adding any number of Cohen reals to a model of CH, in the extension every {\em regular}
space in $C(\omega_1)$ has countable net weight.

We prove in ZFC that for any regular topology of
uncountable weight on $\omega_1$
there is a non-stationary subset that has uncountable weight as well.
Moreover, if all final segments of $\omega_1$ have uncountable weight
then the assumption of regularity can be dropped.
By \cite{JSSz}, the analogous statements for the net weight are independent from ZFC.

Our proofs of all these results make essential use of elementary submodels.
\end{abstract}

\maketitle

The authors dedicate this paper to the memory of
Peter Nyikos, our long-time
mathematical friend and colleague, and in recognition of his
many significant contributions to topology.

\bigskip

This article continues the investigation of some interesting
strengthenings
  of the countable chain condition
  property which, on the other hand, follow from having a countable network.
A key concept, introduced
under a different name by Tkachenko \cite{Tkachenko78},
is that of a hereditarily good (HG) topological space $Y$,
which means that whenever a set of $\aleph_1$-many points from $Y$
together with an assignment of a neighborhood to each of
these points is given, there are two points
which are contained in each other's prescribed neighborhoods.
This property is sometimes also called {\em pointed CCC}.
Clearly such spaces are both hereditarily separable and hereditarily
Lindel\"of. The
  Sorgenfrey line is an example of a hereditarily separable and hereditarily
Lindel\"of space that is not HG
  and Tkachenko asked about the connection of HG to having countable net weight.
  Recall that a family $\mathcal N$ of subsets of a space
  $X$ is a network if for each point $x$ and neighborhood
   $U$, there is an $N\in\mathcal N$ with $x\in N\subset U$.
   The net weight $nw(X)$ of a space $X$ is the smallest cardinality
   of a network for $X$.

  The property of HG was further strengthened in
  \cite{HKsup} and re-named and studied in \cite{JSSz} as follows.

\begin{definition}
\begin{enumerate}
\item  A space $X$ is in $C(\omega_1)$
 if for every   partial neighborhood
 assignment $U$ with
  $\mathop{dom}(U)\subset X$ uncountable,
   there is an uncountable
    $Y\subset \mathop{dom}(U)$
    satisfying that
     $y\in U(z)$ for all
      $y,z\in Y$.

\item A space $X$ is in $N(\omega_1)$
 if every size $\omega_1$ subspace of it
 has a countable network.

\end{enumerate}
\end{definition}

Clearly a space with a countable network is in $C(\omega_1)$, consequently we have
$$\{X : nw(X) = \omega\} \subset N(\omega_1) \subset C(\omega_1).$$

In this paper regular spaces are always assumed to be Hausdorff.

Answering a question of Hart and Kunen, raised in
both \cite{HKsup} and \cite{HKsupdup}, it was proved in
\cite{JSSz} Theorem 1.5 that under CH
every regular space in $C(\omega_1)$ has a countable network, i.e. has countable net weight.
(Actually, this was shown to follow from the principle {\em super stick}, a consequence of CH
that is actually consistent with the negation of CH.) Problem (1) on p. 4 of \cite{JSSz} asks if in this
result {\em regular} can be weakened to {\em Hausdorff}. This problem is
solved in Theorem \ref{negative} below.

\begin{theorem} There is a refinement\label{negative} of the topology
on the reals that is in $C(\omega_1)$ and   does
not have countable net weight.
\end{theorem}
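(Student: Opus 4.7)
My plan is to construct the refinement using a continuous increasing chain $(M_\alpha)_{\alpha<\omega_1}$ of countable elementary submodels of some large $(H_\theta,\in)$, each containing $\mathbb{R}$ together with its Euclidean topology. Put $A=\bigl(\bigcup_\alpha M_\alpha\bigr)\cap\mathbb{R}$, so that $|A|=\aleph_1$, and for $x\in A$ let $\alpha(x)$ be the least $\alpha$ with $x\in M_\alpha$ (necessarily a successor, by continuity of the chain). The refined topology $\tau$ on $\mathbb{R}$ is to be generated by the Euclidean topology together with an uncountable family $\{W_x:x\in A\}$ of extra basic open neighborhoods $W_x\ni x$ lying inside $A$, so that points of $\mathbb{R}\setminus A$ retain their Euclidean neighborhood filter. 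Hausdorffness of $\tau$ is then automatic, since any refinement of a Hausdorff topology is Hausdorff.

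For the net weight lower bound, given a putative countable network $\mathcal{N}$, pigeonhole yields an uncountable $S\subset A$ and a single $N\in\mathcal{N}$ with $x\in N\subset W_x$ for every $x\in S$. Provided each $W_x$ has been chosen to exclude the earlier-level points in the chain, for instance $W_x\cap M_{\alpha(x)-1}=\emptyset$, the inclusion $x_1\in M_{\alpha(x_1)}\subset M_{\alpha(x_2)-1}$ for $\alpha(x_1)<\alpha(x_2)$ forces $x_1\notin W_{x_2}$, contradicting $x_1\in N\subset W_{x_2}$. Hence $nw(\mathbb{R},\tau)>\omega$.

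For $C(\omega_1)$, given any uncountable $B\subset\mathbb{R}$ and neighborhood assignment $U\colon B\to\tau$, I would split into the two cases $|B\setminus A|=\omega_1$ and $|B\cap A|=\omega_1$. In the first case the topology at each $b\in B\setminus A$ is just Euclidean (since $W_x\subset A$ is not a neighborhood of any $b\notin A$), so second-countability of Euclidean $\mathbb{R}$, which places it in $N(\omega_1)\subset C(\omega_1)$, directly produces the required uncountable pointed subfamily. In the second case one first uses $C(\omega_1)$ of Euclidean $\mathbb{R}$ to secure pairwise Euclidean containment, and then uses elementary-submodel reflection along the chain $(M_\alpha)$, in the spirit of a Fodor-type pressing-down argument, to further extract an uncountable subfamily whose members are mutually contained in one another's extra basic opens.

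The genuinely delicate step, which I expect to be the main obstacle, is calibrating the family $\{W_x\}$: the $W_x$'s must vary enough across $x\in A$ to defeat every countable network, yet be compatible enough to admit uncountable pointed subfamilies even under adversarial $U$. The naive choice $W_x=E_x\setminus M_{\alpha(x)-1}$, with $E_x$ a small Euclidean ball around $x$, handles net weight easily but destroys $C(\omega_1)$: an adversary tightening each $U(x)$ down to this minimal basic open forces any candidate pointed subfamily to have constant $\alpha$-level, which is impossible since $\alpha$ is injective on $A$. A correct construction must therefore exploit the non-regularity of $\tau$, since the cited result from \cite{JSSz} (via super stick) collapses regular $C(\omega_1)$ spaces to countable net weight; the likely resolution is to arrange that the $W_x$'s share a common Euclidean-converging tail on $A$ that survives both Euclidean restriction and the pressing-down step while still preventing any countable family of subsets from networking all of them at once.
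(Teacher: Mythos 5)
Your proposal does not actually construct the space: you candidly identify the calibration of the extra neighborhoods $\{W_x\}$ as ``the main obstacle'' and then only speculate about how it might be resolved, so the heart of the proof is missing. Moreover, your one concrete candidate, $W_x = E_x \setminus M_{\alpha(x)-1}$, fails for exactly the reason you state (any pointed uncountable subfamily would have to live on a single, countable level of the chain), and the net-weight argument you give depends on precisely the feature, $W_x \cap M_{\alpha(x)-1} = \emptyset$, that causes the failure. So as written there is no single choice of $\{W_x\}$ for which both halves of your argument go through.

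The paper resolves the tension by making the removed sets \emph{countable}: for each $r \in \mathbb{R}$ one deletes from Euclidean neighborhoods of $r$ a single sequence $a_r$ converging to $r$, chosen to have order type $\omega$ in a fixed well-ordering $\prec$ of $\mathbb{R}$ of type $\mathfrak{c}$. Then $C(\omega_1)$ is not obtained by a pressing-down argument at all but falls out of the Erd\H{o}s--Specker free-set theorem applied to the set mapping $r \mapsto a_r$ (each value has $\prec$-order type at most $\omega$, so every uncountable set has an uncountable free subset). The real work is then on the net-weight side, where removing only countably many points per neighborhood seems too weak; the paper compensates with a counting trick you do not have: the family $\mathcal{S}(M) = \{T \cap M : T \in M \cap [\mathbb{R}]^{\mathfrak{c}}\}$ of traces of large sets on a countable elementary submodel $M$ depends only on $M$ through a parameter ranging over a set of size $\mathfrak{c}$, so one can list these trace-families in a $\mathfrak{c}$-abundant way over $\mathbb{R}$ and arrange that for $\mathfrak{c}$-many $r$ the sequence $a_r$ meets every member of any prescribed countable family of size-$\mathfrak{c}$ sets of which $r$ is a complete accumulation point. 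This diagonalizes against every countable candidate network. Note also that the paper diagonalizes against size-$\mathfrak{c}$ sets and works on all of $\mathbb{R}$, not on an $\aleph_1$-sized set $A$; your restriction to $A$ is not fatal in itself, but it is another sign that your plan is following a different (and, at the key step, unworkable) track.
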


\begin{proof}  The topology $\tau$ on $\mathbb{R}$ will be quite simple.
For each $r\in \mathbb R$, we will choose a sequence
 $a_r\subset \mathbb R\setminus \{r\}$
 that Euclidean converges to $r$.  Then a neighborhood base in $\tau$
 at $r\in\mathbb R$,
 will be the family
   $$\{ (q,s) \setminus a_r : (q,s\in \mathbb Q) \text{ and } q<r<s\}.$$

    The family of sequences $\{ a_r : r\in \mathbb R\}$ will be chosen
    to satisfy:

\smallskip

(*) \ For any countable collection $\{T_n : n\in \omega\}  \subset [\mathbb R]^\mathfrak{c}$,
  there is a set $X\subset \mathbb R$ of cardinality $\mathfrak c$
  such that for each $n < \omega$ and
   $r\in X$ we have    $a_r\cap T_n \ne \emptyset$,  provided that
   $q<r<s$ always implies $|(q,s)\cap T_n| = \mathfrak{c}$, i.e.
   $r$ is a complete accumulation point of $T_n$ in the usual Euclidean topology of $\mathbb R$.

   For any $T \in [\mathbb R]^\mathfrak{c}$, we shall denote by $T^\circ$ the set of all
   complete accumulation points of $T$. Note that we have $|T \setminus T^\circ| < \mathfrak{c}$
   for any $T \in [\mathbb R]^\mathfrak{c}$.

   \medskip

   We now check that having (*) will imply that $\tau$
   does not have a countable network.
   Assume that $\mathcal{S}$ is any countable family of subsets  of $\mathbb R$,
   moreover let $\mathcal{T} = \{T \in \mathcal{S} : |T| = \mathfrak{c} \}$.
   If $\mathcal{T} = \emptyset$ then, of course, $\mathcal{S}$ doesn't even cover $\mathbb{R}$,
   and so cannot be a network for any topology on $\mathbb{R}$. So, we assume that $\mathcal{T} \ne \emptyset$.

   Let us put $$Y = \bigcup \{S \in \mathcal{S} : |S| < \mathfrak{c} \} \cup \bigcup \{T \setminus T^\circ : T \in \mathcal{T}\}. $$
   Then we
   clearly have $|Y| < \mathfrak{c}$.
   Now let  $\{T_n : n\in \omega\}$
    enumerate the family $\mathcal{T}$,
    and let $X \in [\mathbb R]^\mathfrak{c}$ be as in (*). We may then pick $r \in X \setminus Y$.

 Clearly then $r \in S \in \mathcal{S}$ implies that $S = T_n$ for some $n < \omega$, moreover
 $r$ is a complete accumulation point of $T_n$, hence $a_r\cap T_n \ne \emptyset$.
 It follows then that $\mathbb R\setminus
  a_r$ is a $\tau$-neighborhood of $r$ that fails to contain
  any $S \in \mathcal{S}$ whenever $r\in S$. Hence $\mathcal{S}$ is not a network for our topology $\tau$.
  \bigskip

Now, to define the sequences $\{ a_r : r\in \mathbb R\}$ satisfying (*) we first fix a well-ordering $\prec$ of
  $\mathbb R$ in order-type $\mathfrak c$.
 We will also use
 elementary submodels to guide our choices.

    Let $\theta $ be a regular cardinal so that
     $\mathcal P(\mathbb R)\in H(\theta)$.
   Let $\mathfrak M$ denote the set of
   countable elementary submodels $M$ of
    $H(\theta)$ with $(\mathbb R,\prec)\in M$.

     For each
      $M\in \mathfrak M$,
      we define
      $\mathcal S(M) $ to be the family\\
      \centerline{
    $
    \{ T\cap M : T\in M\cap
        [\mathbb R]^{\mathfrak c} \}$.}
In other words, $\mathcal S (M)$ is the countable family consisting of those
countable subsets
of $\mathbb R$ which are the traces on $M$ of the cardinality
$\mathfrak c$ subsets of $\mathbb R$ that happen to be
members of $M$. Note that then $M\cap \mathbb R$ is the largest element of $\mathcal S (M)$, moreover
every such trace  $T \cap M$ is a $\prec$-cofinal subset of $M\cap \mathbb R$.

The map sending each $M\in \mathfrak M$
to $\mathcal S(M)$ is decidedly not 1-to-1 since
$\mathfrak M$ has cardinality at least $2^{\mathfrak c}$,
while
           $ \mathfrak S =   \{ \mathcal S(M) : M\in \mathfrak M\}$ obviously
           has cardinality $\mathfrak c$.

We now formulate and prove a technical lemma  that will play a crucial role in establishing (*).

\begin{lemma}\label{lm:T(M,r)}
Let us fix $M\in \mathfrak M$ and $r\in \mathbb R$, moreover let $\mathcal{T} = \mathcal{T}(M, r)$ be
the family of those $T \in \mathcal S (M)$ for which $r$ is a limit point of every  $\prec$-final segment of $T \cap (q, s)$
for any pair of rationals with $q<r<s$.
Then there is a sequence $a = \{x_n : n < \omega\} \in [M\cap \mathbb R]^\omega$ of $\prec$-order-type $\omega$
that converges to $r$ and satisfies
$a \cap T \ne \emptyset$ for every $T \in \mathcal{T}$.
\end{lemma}

\begin{proof}
First we note that $\mathcal{T}$ is (countably) infinite. Indeed,
for any pair of rationals with $q<r<s$
  the interval $(q,s)$ is an element
  of $M$. It follows that the countable
  set
   $T = (q, s)\cap M$ is an element of
    $\mathcal S (M)$ that is $\prec$-cofinal
    in $\mathbb R\cap M$ together with all its $\prec$-final segments.
    Furthermore,
    for each $x\in
      \mathbb R\cap M$,
     $r$ is a limit point of
     the final segment $\{y \in T : x \prec y\}$ of $T$.
     This clearly yields us infinitely many members of $\mathcal{T}$.

    Let $\{ T_n : n\in \omega\}$ enumerate
all the elements of $\mathcal T$.  Note that then, for every $n \in \omega$,
$\,r$ is a limit point of the $\prec$-cofinal set $(r - 1/n, r + 1/n) \cap T_n$ as well.

It is now straightforward to
choose, by an $\omega$-length recursion,
the required set $a = \{x_n : n < \omega\} \in [\mathbb R\cap M]^\omega$.
Indeed, if $x_{n-1}$ is given then we may just choose $x_n$ from the non-empty set
\ \ $\{y \in (r - 1/n, r + 1/n) \cap T_n : x_{n-1} \prec y\} \setminus \{r\}$.
\end{proof}

We should emphasize that the above defined family $\mathcal{T}(M, r)$ and, hence
the set $a$,
that we chose only depends on $S (M)$,
even though its definition used $M$.

We may now fix a listing $\{ \mathcal S_r : r\in \mathbb R\}$
of $\mathfrak S$  that is $\mathfrak{c}$-abundant, i.e.
for every $\mathcal S \in \mathfrak S$, the set
$\{ r\in \mathbb R : \mathcal S_r = \mathcal S\}$
has cardinality $\mathfrak c$.

\medskip

Given any $r\in \mathbb R$,
we fix  $M\in \mathfrak M$ such that
$\mathcal S(M) = \mathcal S_r$, and then
apply Lemma \ref{lm:T(M,r)} to $M$ and $r$
to find $a_r \in [M\cap \mathbb R]^\omega$ with the properties of $a$ as described there.

Next we show that with this choice $\{a_r : r \in \mathbb{R}\}$
the topology $\tau$ satisfies (*), hence $\tau$ has uncountable net weight.

To see this,
consider any family $\{ T_n :n \in\omega\} \subset
[\mathbb R]^\mathfrak{c}$ and choose any $M\in \mathfrak M$
            so that $\{ T_n : n\in \omega\}\in M$.
            Recall that $T_n^\circ$ denotes the set of all
            complete accumulation points of $T_n$, moreover we have $|T_n \cap T_n^\circ| = \mathfrak{c}$,
            in fact even $|T_n \setminus T_n^\circ| < \mathfrak{c}$. By elementarity we then clearly have
            $T_n \cap T_n^\circ \in \mathcal{T}(M, r)$, consequently $a_r  \cap T_n \ne \emptyset$
            for all $n \in \omega$. It is important to recall here that the family $\mathcal{T}(M, r)$ and hence
            the set $a_r$ is completely determined by $\mathcal S_r = \mathcal S (M)$.

             To complete the proof that (*) holds, we only
have to observe  that  if $\mathcal S \in \mathfrak{S}$ then $X=\{ r  : \mathcal S_r = \mathcal S\}$
             has cardinality $\mathfrak c$.
             \bigskip

             Finally, we need to prove that $(\mathbb R,\tau)$ is in
              $C(\omega_1)$. To do this
              we consider any $\tau$-neighborhood assignment $U$ so that
               $\dom(U)$ is an uncountable subset of
        $\mathbb R$.
        By passing to an uncountable subset we may assume
        that, there is a fixed pair $q<s\in\mathbb Q$
        such that $r\in U(r)\supset (q,s)\setminus a_r$
        for all $r\in \dom(U)$.
     We must then find an uncountable $Y\subset\dom(U)$
    satisfying that $ Y\subset (q,s)\setminus a_{r}$
    for all $r\in Y$. Equivalently, it suffices
    to show that there is an uncountable $Y\subset
    \dom(U)$ satisfying that $y\notin a_r$
    for all distinct $r,y\in Y$. Fortunately, as the $\prec$-order type of each $a_r$ is $\omega$, this is
    an immediate consequence of the following instance
   (Proposition \ref{Erdos-Specker}) of
    the classical
     Erd\H os-Specker theorem in \cite{ErdosSpecker}.
\end{proof}

\begin{proposition}\label{Erdos-Specker}
If $f$ is any function from
     a well-ordered set $(U,\prec)$ of order-type $\omega_1$
     such that $f(x)\subset U$  has $\prec$-order type
     at most $\omega$ for all $x\in U$,
     then there is an uncountable free set
     $Y\subset U$, meaning that $y\notin f(x)$
     for all $x\neq y\in Y$.
     \end{proposition}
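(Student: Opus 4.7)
The plan is to identify $(U,\prec)$ with $\omega_1$ via the canonical order-iso\-mor\-phism, reducing the statement to finding an uncountable free set for a function $f:\omega_1\to[\omega_1]^{\le\omega}$ with $\mathrm{otp}(f(\alpha))\le\omega$ for all $\alpha<\omega_1$. I would then dispose of the ``upward'' ($y>x$) and ``downward'' ($y<x$) parts of the free-set condition separately, dealing with the former by a soft club argument and with the latter by a pressing-down analysis.

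For the upward part, since $f(\alpha)$ has order type at most $\omega$ inside $\omega_1$ it is bounded; set $h(\alpha)=\sup f(\alpha)+1<\omega_1$. The closure points $E=\{\delta<\omega_1:h(\alpha)<\delta\text{ for all }\alpha<\delta\}$ of $h$ form a club, and any $Y\subseteq E$ automatically satisfies $\delta\notin f(\alpha)$ whenever $\alpha<\delta$ are both in $Y$. What remains is to find an uncountable $Y\subseteq E$ which is free in the downward direction, i.e., $\alpha\notin g(\beta):=f(\beta)\cap\beta$ for $\alpha<\beta$ in $Y$. Here $g$ is a regressive set-valued function with $\mathrm{otp}(g(\beta))\le\omega$, and a dichotomy applies: if $\sup g(\beta)<\beta$ for a stationary set of $\beta\in E$, then Fodor's theorem fixes $\sup g(\beta)=\gamma_0$ on a stationary $T\subseteq E$, and $Y:=T\setminus\gamma_0$ works. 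Otherwise $g(\beta)$ is a cofinal $\omega$-sequence in $\beta$ on a club of $\beta$---a ladder-system configuration.

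The main obstacle is this ladder case. Iterated pressing-down on the enumeration $g(\beta)=\{g_n(\beta):n<\omega\}$ yields a descending chain $S_0\supseteq S_1\supseteq\cdots$ of stationary sets and constants $c_0<c_1<\cdots$ with $g_n(\beta)=c_n$ on $S_n$, but the intersection $\bigcap_n S_n$ is typically not stationary (in fact it has at most one element, namely $\sup_n c_n$), so a naive diagonal argument fails and a more delicate combinatorial choice is required. The cleanest route, and the one indicated by the paper's citation, is to observe that the whole statement is exactly an instance of the classical set-mapping theorem of Erd\H{o}s and Specker in \cite{ErdosSpecker}, which produces the desired uncountable free set directly, and simply to quote that result rather than reproducing the ladder analysis in full.
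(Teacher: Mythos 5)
Your proposal is correct and, in the end, takes exactly the same route as the paper: the paper gives no proof of this proposition at all, presenting it as an instance of the classical Erd\H{o}s--Specker set-mapping theorem and simply citing \cite{ErdosSpecker}, which is precisely where your argument lands after the preliminary reductions. Those reductions (the club of closure points for the upward direction, Fodor for the bounded downward case) are sound and usefully isolate the ladder-system case as the genuine content of the theorem, but they are redundant once you quote the full classical result anyway; the only nit is that in the bounded case you want $Y=T\setminus(\gamma_0+1)$ rather than $T\setminus\gamma_0$.
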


\bigskip

Theorem 1.6  of \cite{JSSz} is a strengthening of the above mentioned Theorem 1.5 of \cite{JSSz}. It
says that after adding any number of Cohen reals to a model of CH, in the extension every regular
space in $C(\omega_1)$ belongs to
a class called there
$K(\omega_1)$, for which we have $$N(\omega_1) \subset K(\omega_1) \subset C(\omega_1).$$
We do not give here the, somewhat involved, definition of $K(\omega_1)$  because we shall not need it.
We only mention that, by \cite{JSSz}, it is consistent to have
regular spaces that
belong to  $K(\omega_1) \setminus N(\omega_1)$ or to $C(\omega_1) \setminus K(\omega_1)$.

Problem (2) on p. 4 of \cite{JSSz} asks if Theorem 1.6  of \cite{JSSz} can actually be strengthened to
saying that after adding any number of Cohen reals to a model of CH, in the extension every regular
space in $C(\omega_1)$ belongs even to $N(\omega_1)$. Our next result
yields an affirmative answer to this problem, in fact it yields much more.

\begin{theorem}
If CH holds and we add $\kappa$ Cohen reals, i.e. we force
with $\mathop{Fn}(\kappa,2)$,
then in the extension every regular space in $C(\omega_1)$
has countable net weight.
\end{theorem}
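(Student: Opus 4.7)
The plan is to transfer the problem to an intermediate Cohen extension in which CH still holds, and there apply Theorem~1.5 of \cite{JSSz}. Let $(X,\tau)$ be a regular space in $C(\omega_1)$ in $V[G]$, where $G$ is $\mathop{Fn}(\kappa,2)$-generic over $V\models\mathrm{CH}$. Since regular $C(\omega_1)$ spaces are hereditarily separable (so $|X|\le 2^\omega$), and countable net weight for such an $X$ can be attacked via $\omega_1$-sized subspaces, we focus the argument on the case $|X|=\omega_1$, identifying $X$ with $\omega_1$; the reduction from the general case will be a separate, more routine matter.

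By the ccc of $\mathop{Fn}(\kappa,2)$ combined with a L\"owenheim--Skolem argument on nice names, we find $A\subset\kappa$ with $|A|=\omega_1$ such that $(X,\tau)\in V_1:=V[G_A]$, where $G_A:=G\cap\mathop{Fn}(A,2)$. Because $\mathop{Fn}(\omega_1,2)$ preserves CH over CH ground models, $V_1\models\mathrm{CH}$. If we can verify that $X\in C(\omega_1)$ as computed in $V_1$, then Theorem~1.5 of \cite{JSSz} applied inside $V_1$ produces a countable network $\mathcal N\in V_1$ for $X$, and $\mathcal N$ remains a network in $V[G]$, finishing the proof.

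The main obstacle is thus the descent of the $C(\omega_1)$-property from $V[G]$ to $V_1$. Fix a partial neighborhood assignment $U\in V_1$ with uncountable domain, and consider the symmetric \emph{clique graph} $G_U\in V_1$ on $\dom(U)\subset\omega_1$ with $\alpha\sim\beta\iff\alpha\in U(\beta)\wedge\beta\in U(\alpha)$; the $V[G]$-hypothesis furnishes an uncountable $G_U$-clique in $V[G]$. My plan to extract such a clique already in $V_1$ is to work with a nice $\mathop{Fn}(\kappa\setminus A,2)$-name $\dot Y$ together with a condition $p_0\Vdash\textrm{``}\dot Y\textrm{ is an uncountable }G_U\textrm{-clique''}$; for each of the uncountably many $\alpha<\omega_1$ for which this is possible, pick a finite condition $q_\alpha\le p_0$ forcing $\alpha\in\dot Y$, and then apply the $\Delta$-system lemma inside $V_1$ to thin to an uncountable pairwise-compatible family $\{q_\alpha:\alpha\in I\}$. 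For any $\alpha,\beta\in I$, the union $q_\alpha\cup q_\beta\le p_0$ forces $\alpha,\beta\in\dot Y$ and hence $\alpha\sim\beta$ in $G_U$; since this adjacency is a statement about the $V_1$-graph $G_U$, the set $I\in V_1$ is the required clique. This Cohen-specific $\Delta$-system descent, together with the preliminary reduction to $|X|=\omega_1$, will be the essential new content; the remaining steps are standard or an invocation of Theorem~1.5 of \cite{JSSz}.
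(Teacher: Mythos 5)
Your $\Delta$-system descent of the $C(\omega_1)$ property to the intermediate extension is sound and is essentially the argument the paper uses (there it is phrased with an elementary submodel $M$ and the conditions $p_\alpha\cap M$, but the content is the same). The gap is in the part you defer as ``a separate, more routine matter.'' Reducing to $|X|=\omega_1$ and proving that every $\omega_1$-sized subspace has a countable network only establishes that $X\in N(\omega_1)$; it does \emph{not} give $nw(X)=\omega$, and this implication is not a ZFC fact one can wave at. Indeed, the paper points out that Hart and Kunen have a consistent example of a regular space in $C(\omega_1)\cap N(\omega_1)$ of net weight $\omega_2$, so the passage from ``all $\omega_1$-sized subspaces have countable networks'' to ``$X$ has a countable network'' is exactly the (consistently false in general) step your outline omits. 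Since $|X|$ can be as large as $2^\omega=\kappa$ in the extension, this is where the real work lies. The paper's solution is to take an elementary submodel $M\prec H(\theta)$ with $|M|=\aleph_1$ and $M^\omega\subset M$, show that the trace space $X\cap M$ has a countable network consisting of closures of \emph{countable} sets, observe that those countable sets have names in $M$, and then use $M[G]\prec H(\theta)[G]$ to conclude that the $\tau$-closures of those same countable sets form a network for all of $X$. None of this is present in, or recoverable from, your outline.

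A second, related problem: even after restricting to an $\omega_1$-sized subspace, the step ``find $A\subset\kappa$ with $|A|=\omega_1$ such that $(X,\tau)\in V[G_A]$'' does not go through as stated, because $\tau$ (indeed any base for it) may have cardinality $2^\omega=\kappa>\omega_1$ in $V[G]$, so it cannot be captured by $\omega_1$-many coordinates. One is forced to work with the topology generated by only $\omega_1$-many (nice names for) open sets — this is precisely the paper's $\sigma$ — and must then (i) re-verify that this coarser topology is still regular, which the paper does via nice names, the maximum principle and elementarity, and (ii) explain why a countable network for the coarser topology yields one for $\tau$, which loops back to the missing elementarity/closure argument above. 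So the proposal, as it stands, would at best answer Problem (2) of \cite{JSSz} (the $N(\omega_1)$ version) and does not prove the stated theorem.
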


\begin{proof}
Let $X$ be the base set for a
topology and let $\tau$
denote a family
of $\mathop{Fn}(\kappa,2)$-names
for subsets of $X$ that is
forced to be a basis for a regular topology on $X$
that is in the class
  $C(\omega_1)$.
  We may assume that the elements
  $\dot W\in \tau$ are nice names
  in the sense that
  they are subsets
  of $X\times \mathop{Fn}(\kappa,2)$
  and, for
  every $x\in X$, the set
  $\{ p : (x,p)\in \dot W\}$
  is a (possibly empty) antichain.
We may further assume that $\tau$ contains
the set of
all nice names $\dot W$ that have the property
that $1\Vdash \dot W\in\tau$.
Naturally the evaluation, $\mbox{val}_G(\dot W)$,
of such a name $\dot W$ by a generic filter $G$ is
the set $\{ x : ~~(\exists p\in G)~(x,p)\in \dot W\}$.

  \medskip

  Next we choose an elementary
  submodel $M$ of $H(\theta)$,
  for any regular $\theta>2^\kappa$,
  so that $\{X, \kappa,\tau\}\in M$.
  Choose $M$   so  that also $M^\omega\subset M$
  and  $|M|= \mathfrak{c} = \aleph_1$.
  \medskip

  Let $\tau_M$ be the elements
  (names) of $\tau$ that are members
  of $M$ (simply $\tau_M = \tau\cap M$).
  Let $G_M$ be a generic filter
  for $\mathop{Fn}(\kappa\cap M,2)
   = M\cap \mathop{Fn}(\kappa,2)$.
   For each $\dot W\in \tau_M$,
      the set $  \dot W\cap M$
      is a $\mathop{Fn}(\kappa\cap M,2)$-name
      of a subset of $X\cap M$.
   In this extension $V[G_M]$, we
   let $$\sigma =
    \{ \mathop{{val}_{G_M}}(\dot W\cap M)
     :
    \dot W \in \tau\cap M\}.$$

We claim that $\sigma$
     generates a regular topology on $X\cap M$
     in the extension $V[G\sb M]$. Let $x\in X\cap M$
     and $ \dot U$ be an element of $\tau \cap M$
     and let $p\in G_M$ force that $x\in \dot U$.
Since $p$ forces the statement that
 there is a $W\in \tau$ with $x\in W\subset
  \overline{W}\subset \dot U$, we can
  apply the forcing maximum principle
  and our assumption on $\tau$ to
  deduce that there is a $\dot W\in \tau$
  satisfying that $p\Vdash   x\in\dot W
   $ and that $p\Vdash \mbox{cl}_{X}(\dot W)\subset\dot U$.
By elementarity, there is such a $\dot W$ in $M$.
For each $y\in X\cap M$, each of the statements
 $p\Vdash y\in \mbox{cl}_X(\dot W)$ and
  $p\Vdash y\in \dot U$ are absolute
  between $M$ and $H(\theta)$.
  This proves that
   the topology generated by $\sigma$ is regular in the model $V[G_M]$.
   A similar, but even  simpler, argument may be used to prove its Hausdorffness.
\medskip

 We now prove
   that, in the extension $V[G_M]$, the space
   $(X\cap M, \sigma)$ has
   a countable network consisting
   of closures of countable sets.
 Since CH holds in this extension, it
 will suffice for this,  by
          \cite{JSSz}*{Theorem 1.5},
  to prove that
          $(X\cap M, \sigma)$
          has the  $C(\omega_1)$ property (in $V[G_M]$).

         \bigskip

   It will be simplest to work in $V$
   and to suppose that $1$ forces
     $\dot U$
     is a $\mathop{Fn}(\kappa\cap M, 2)$-name
      (nice as usual)
      of a partial
      neighborhood assignment
      into $\tau\cap M $
      and $\mathop{dom}(\dot U)$ is an
      uncountable subset of $X\cap M$.
It should be clear that we may assume that
the elements in the range of $\dot U$
are named by members of  $\sigma$.
      \medskip

We use the assumption that
 $1$ forces over $\mathop{Fn}(\kappa,2)$
 that the space $(X, \tau)$ is in $C(\omega_1)$.
 So, there is a nice name, $\dot Y$,
 for an uncountable subset of
$\mathop{dom}(\dot U)$ witnessing
 $C(\omega_1)$.
       By transfinite recursion
       we may then choose for $\alpha < \omega_1 $
      conditions $p_\alpha\in
      \mathop{Fn}(\kappa,2)$, distinct points
       $y_\alpha\in X\cap M $, and names $\dot W_\alpha \in \tau\cap M$,
       satisfying that $p_\alpha$
       forces that $y_\alpha\in \dot Y$
        and $\dot U(y_\alpha)$ is
        assigned to the name
         $\dot W_\alpha\cap M$.
Let us note that
  $p_\alpha\cap
        M\in M$ forces that $y_\alpha\in
        \mathop{dom}(\dot U)$
        and that $\dot U(y_\alpha)=\dot W_\alpha$.
        \medskip

        Apply a standard $\Delta$-system argument
        to find a root $\bar p\in \mathop{Fn}(\kappa,2)$
        and an uncountable set $\Lambda\subset
        \omega_1$ satisfying that
         $\bar p\subset p_\alpha$
         and $p_\alpha\setminus \bar p$
and $p_\beta\setminus \bar p$ have disjoint
domains for all $\alpha\neq \beta\in \Lambda$.
\medskip

Let us note that   $p_\alpha\cup p_\beta$ forces
 that $y_\alpha \in \dot W_\beta$ for all
  $\alpha,\beta\in \Lambda$.
  Also, by elementarity
we then have that, for all $\alpha,\beta\in \Lambda$,
\begin{enumerate}
\item $(p_\alpha\cap M)\cup (p_\beta\cap M)$
forces that $y_\alpha\in \dot W_\beta$, and
\item $(p_\alpha\cap M)$ forces that
   $\dot U(y_\alpha) = \dot W_\alpha \cap M$.
\end{enumerate}
This completes the proof that $(X\cap M, \sigma)$
is forced by $\mbox{Fn}(\kappa\cap M,2)$ to have
 the $C(\omega_1)$ property.

It follows from \cite{JSSz} that $(X\cap M,\sigma)$
is therefore forced by
 $\mbox{Fn}(\kappa\cap M,2)$ to have a countable network.
Since our space $(X\cap M,\sigma)$ is regular
and hereditarily separable, it also has a countable
network consisting of closed separable sets.
   Using that $M^\omega\subset M$,
    there is a list
    $\{\dot S_n : n\in\omega\}\in M$
    of (nice) names for countable subsets of $X \cap M$
    whose closures taken in $(X\cap M,\sigma)$ form
     a network for $(X\cap M, \sigma)$.
     However it now follows,
     by elementarity, that it is forced
     that
      $\{
      \mbox{cl}_{(X,\tau)}(\dot S_n) :
         n\in \omega\}$
         is a network for all of $(X, \tau)$.
    This is because,
     by the Tarski-Vaught criterion (see \cite{K1}),
      $M[G]$ is an elementary
      submodel of $H(\theta)[G]$.
\end{proof}

\bigskip

Recall that a poset has property $K$ if every
uncountable subset of it has an uncountable subset that
is linked.
The interested reader can check that the only properties
of the poset $\mathbb P = \mbox{Fn}(\kappa,2)$ that were used in
the proof are that
both $\mathbb P$  and $\mathbb P/G_M$ have
property K.   These posets
were similarly utilized in \cite{DH2014}, where they were called {\em finally property K}.

 \bigskip

Hart and Kunen constructed in \cite{HKsup} a consistent example of
a first countable 0-dimensional, hence regular space of net weight $\omega_2$
that is in $C(\omega_1)$. However,
their example is also in $N(\omega_1)$.

In  \cite{JSSz}, Theorem 4.11 it was shown to be consistent that there is a 0-dimensional topology
on $\omega_1$ such that
  a subspace of it has countable net weight iff it is non-stationary.
It is easy to see that such a space is in $C(\omega_1) \setminus N(\omega_1)$.

This led us to consider the natural question if an analogous result could be proved
in which net weight is replaced with weight.
Our next result shows that this is impossible for regular spaces.

\begin{theorem}\label{regthm}
If $\tau$ is a regular
topology of uncountable weight on a stationary subset $S$ of $\omega_1$ then
$S$ has a non-stationary subset that has uncountable weight as well.
\end{theorem}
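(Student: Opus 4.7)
The plan is to prove the contrapositive: assume every non-stationary subset of $S$ has countable weight in its subspace topology, and derive that $(S,\tau)$ itself has countable weight, contradicting $w(\tau)>\omega$. Since every bounded subset of $\omega_1$ is non-stationary, the hypothesis in particular forces $w(S\cap\gamma)\le\omega$ for every $\gamma<\omega_1$. So the argument splits naturally into two cases, depending on whether some \emph{final} segment $S\setminus\gamma$ also has countable weight.

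In the case where every final segment $S\setminus\gamma$ has uncountable weight, we are exactly in the setting of the stronger (regularity-free) version of the theorem promised by the ``moreover'' clause of the abstract; applying that form directly yields a non-stationary $T\subseteq S$ with $w(T)>\omega$, contradicting our contrapositive assumption. The real work is therefore in the remaining case: some $\gamma<\omega_1$ satisfies $w(S\setminus\gamma)\le\omega$. Here I would fix countable subspace bases $\mathcal B_1,\mathcal B_2$ for $S\cap\gamma$ and $S\setminus\gamma$ respectively, and a countable elementary submodel $M\prec H(\theta)$ containing $\tau,S,\gamma,\mathcal B_1,\mathcal B_2$ together with a chosen function $g$ sending each subspace-basic $B$ to a $\tau$-open $V_B=g(B)\in\tau$ with $V_B\cap A=B$ on the appropriate piece. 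The candidate countable basis for $\tau$ is then $\{V_B:B\in\mathcal B_1\cup\mathcal B_2\}\subseteq\tau\cap M$.

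To verify the basis property for a given $\beta\in S$ and $V\in\tau$ with $\beta\in V$, regularity provides $V'\in\tau$ with $\beta\in V'\subseteq \overline{V'}\subseteq V$; feeding $V'\cap A$ into the subspace basis for whichever piece $A$ contains $\beta$ yields a $B$ with $\beta\in B\subseteq V'\cap A$, and $V_B$ then contains $\beta$. The delicate step is ensuring $V_B\subseteq V$ globally, not merely $V_B\cap A=B\subseteq V\cap A$. I would arrange this by choosing $g$ so that $V_B$ is sandwiched between $B$ and the $\tau$-interior of the closure of a pre-specified neighborhood of $B$, baking a closure operation into the definition of $V_B$. Elementarity, applied to $V_B\in\tau\cap M$ and to subspace relations with parameters in $M$, is what allows the verification to be carried out uniformly for $\beta$ both below and above the height $M\cap\omega_1$.

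The main obstacle is precisely this global containment $V_B\subseteq V$, and it is exactly here that regularity of $\tau$ is essential: without regularity one has no mechanism to convert the subspace relation $B\subseteq V\cap A$ into the ambient relation $V_B\subseteq V$, which is consistent with the authors' observation that dropping regularity necessitates the extra assumption that all final segments have uncountable weight. Making the closure-based choice of $V_B$ behave compatibly on the two pieces $S\cap\gamma$ and $S\setminus\gamma$ simultaneously, so that the countable family $\{V_B\}$ is actually a basis at every point of $S$, is the crux of the proof.
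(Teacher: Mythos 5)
Your proposal has a genuine gap, and it sits exactly where you locate the ``crux'': the gluing step in Case 2 is not merely delicate, it is false as a general topological principle, and nothing in your setup supplies the extra structure needed to rescue it. Knowing that $S\cap\gamma$ and $S\setminus\gamma$ each have countable weight does not bound $w(S)$: the space $\omega\cup\{p\}\subseteq\beta\omega$ with $p\in\omega^*$ is regular and is the union of two second countable subspaces, yet $w(\omega\cup\{p\})\ge\chi(p,\omega\cup\{p\})>\omega$. Your proposed repair via closures fails for the same reason the Sorgenfrey line does: a regular space with a \emph{dense} second countable subspace (here $\mathbb{Q}$) can have weight $\mathfrak{c}$, because the operation $B\mapsto \mathrm{int}\,\overline{B}$ applied to a base of the dense piece produces only a $\pi$-base of the whole space --- for a point $x$ outside the dense piece there need be no single basic $B\subseteq V'$ with $x\in \mathrm{int}\,\overline{B}$. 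Note also that in your contrapositive setting the hypothesis only controls non-stationary subsets, while $S\setminus\gamma$ and $(S\setminus\gamma)\cup\{\alpha\}$ are stationary; so a point $\alpha<\gamma$ may have countable character in every non-stationary set containing it yet uncountable character in $S$ (this is precisely the ``bad point'' phenomenon that the paper has to fight in the non-regular Theorem \ref{one}). Finally, Case 1 delegates all remaining work to Theorem \ref{one}, which is stated for topologies on $\omega_1$ rather than on a stationary $S$, is proved only later, and is substantially harder than the theorem at hand; even granting the (routine but unaddressed) transfer of non-stationarity along the increasing enumeration of $S$, you would be deriving the easy theorem from the hard one.

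For comparison, the paper's proof needs no case division on final segments. It first reduces to the situation where every countable subspace has countable weight and $\tau$ is hereditarily separable, and then uses regularity together with a countable dense set $D$ to get first countability via $\chi(\alpha,S)=\chi(\alpha,D\cup\{\alpha\})$. It then takes a continuous chain $\langle M_\alpha:\alpha<\omega_1\rangle$ of countable elementary submodels, puts $\delta_\alpha=M_\alpha\cap\omega_1$ and $T=S\setminus\{\delta_\alpha:\alpha<\omega_1\}$, and shows directly that $w(T)>\omega$: each countable family $\mathcal{A}_\alpha=\bigcup\{\mathcal{B}_\xi:\xi\le\delta_\alpha\}$ fails to be a base at some $\eta\in S\cap(\delta_\alpha,\delta_{\alpha+1})\subseteq T$ by elementarity, and the failure, witnessed by $B\setminus\overline{V}\ne\emptyset$ via regularity, survives restriction to $T$ because $S\cap\delta_0\subseteq T$ is dense. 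The reductions to separability and first countability, and the density of the chosen non-stationary set, are exactly the ingredients your sketch is missing.
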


\begin{proof}
If $S$ has a countable subset of uncountable weight we are done because all countable sets are non-stationary.
So we may assume that all countable subspaces have countable weight.

If $\tau$ is not hereditarily separable then $S$ has an uncountable left-separated subspace,
all whose subspaces have uncountable weight. So, we are done again because every uncountable subset
of $S$ has an uncountable non-stationary subset. So, we may assume that $\tau$ is (hereditarily) separable

This, in turn, implies that $\tau$ is first countable. Indeed, let $D$ be a countable $\tau$-dense
subset of $S$. Then $D \cup \{\alpha\}$ is also $\tau$-dense for every point $\alpha \in S$.
So, since $\tau$ is regular, we have $$\chi(\alpha, S) = \chi(\alpha, D \cup \{\alpha\}) \le \omega$$
by our first assumption. We may thus fix for each $\xi \in S$ a countable $\tau$-neighborhood base $\mathcal{B}_\xi$.

Now, let us fix a continuous increasing elementary chain $\langle M_\alpha : \alpha < \omega_1\rangle$
of countable elementary submodels of $H_\theta$ for a large enough regular cardinal
$\theta$
such that $S, \tau$, and the function sending $\xi \in S$ to $\mathcal{B}_\xi$ all belong to $M_0$.
($\theta = (2^{\omega_1})^+$ will suffice.)
Moreover, let us put $\delta_\alpha = M_\alpha \cap \omega_1$ for each $\alpha < \omega_1$.
It follows from our assumptions that $S \cap \delta_0$ is $\tau$-dense in $S$.

As is well-known, then $C = \{\delta_\alpha : \alpha < \omega_1\}$ is closed and unbounded in $\omega_1$,
consequently $T = S \setminus C$ is non-stationary. Note that
$$T = (S \cap \delta_0) \cup \bigcup\{S \cap (\delta_\alpha, \delta_{\alpha+1}) : \alpha < \omega_1\}.$$
We shall complete the proof by showing that the $\tau$-weight of $T$ is uncountable.

To see this, let us put for each $\alpha < \omega_1$
$$\mathcal{A}_\alpha = \bigcup \{\mathcal{B}_\xi : \xi \in S \text{ and } \xi \le \delta_\alpha\},$$
moreover set $\mathcal{A} = \bigcup \{\mathcal{A}_\alpha : \alpha < \omega_1\}.$
Note that we have $\mathcal{B}_\xi \in M_\alpha$ whenever $\xi \in S \cap \delta_\alpha = S \cap M_\alpha$,
consequently, we have $\mathcal{A}_\alpha \in M_{\alpha+1}$ for all $\alpha < \omega_1$.

Since $\mathcal{A}_\alpha$ is countable for each $\alpha < \omega_1$, it follows that $\mathcal{A}_\alpha$ is not a $\tau$-base,
hence there are some $\eta \in S$ and $V \in \mathcal{B}_\eta$ such that if $\eta \in B \in \mathcal{A}_\alpha$
then $B \setminus \overline{V} \ne \emptyset$. Here we use the regularity of $\tau$ again.
Actually, by elementarity, there is such an $\eta \in S$ satisfying $\delta_\alpha < \eta < \delta_{\alpha + 1}$, hence $\eta \in T$.

Since $S \cap \delta_0$ is $\tau$-dense in $S$, we have $S \cap \delta_0 \cap (B \setminus \overline{V}) \ne \emptyset$ as well,
hence $T \cap (B \setminus \overline{V}) \ne \emptyset$ as well because $S \cap \delta_0 \subset T$ is $\tau$-dense.
This means that such an $\eta \in T$ is a witness for the fact that no family $$\{B \cap T : B \in \mathcal{A}_\alpha\}$$
is a $\tau$-base of $T$, while their union $\mathcal{B} = \{B \cap T : B \in \mathcal{A}\}$ for all $\alpha < \omega_1$
clearly is. Since every countable subfamily of $\mathcal{A}$ is included in some $\mathcal{A_\alpha}$,
it follows that the traces of its members on $T$ does not form a $\tau$-base of $T$.
In other words, no countable subfamily of $\mathcal{B}$ is a $\tau$-base of $T$.

But every base of any space $X$ has a subfamily of size $w(X)$ that is a base of $X$,
consequently we conclude that the $\tau$-weight of $T$ must indeed be uncountable.
\end{proof}

Of course, Theorem \ref{regthm} implies that no regular topology on $\omega_1$ can have the property that
a subspace of it has countable weight iff it is non-stationary.
Our last result extends this for all topologies on $\omega_1$ but with
a considerably harder proof.

\begin{theorem}\label{one}
If $\tau$ is any topology on $\omega_1$
such that every final segment  $[\gamma, \omega_1)$ of $\omega_1$
has uncountable weight then some non-stationary set has uncountable weight as well.
\end{theorem}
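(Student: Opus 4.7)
The plan is to adapt the argument of Theorem \ref{regthm}, keeping the preliminary reductions and the continuous elementary chain, but compensating for the loss of regularity (and of first countability) by using the countable families $\mathcal{A}_\alpha:=\tau\cap M_\alpha$ in place of pointwise local bases $\mathcal{B}_\xi$. The hypothesis that every final segment has uncountable weight is the extra lever that makes up for dropping regularity.

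As in Theorem \ref{regthm}, the first reductions are: if some countable subspace has uncountable weight we are done; and if $\tau$ is not hereditarily separable, then an uncountable left-separated subspace $L\subset\omega_1$ has weight $\aleph_1$, and the set $L':=\{l_{\beta+1}:\beta<\omega_1\}$ of successors in the increasing enumeration of $L$ is uncountable, disjoint from the club of $\alpha<\omega_1$ with $L\cap\alpha$ unbounded in $\alpha$, hence non-stationary and of weight $\aleph_1$. So assume $\tau$ is hereditarily separable and fix a countable $\tau$-dense $D\subset\omega_1$. Next, fix a continuous increasing chain $\langle M_\alpha:\alpha<\omega_1\rangle$ of countable elementary submodels of $H_\theta$ with $\tau,D\in M_0$ and $M_\alpha\in M_{\alpha+1}$, and set $\delta_\alpha=M_\alpha\cap\omega_1$, $C=\{\delta_\alpha:\alpha<\omega_1\}$ (a club), and $T=\omega_1\setminus C$ (non-stationary); since $D\subset\delta_0\subset T$, $T$ is $\tau$-dense in $\omega_1$. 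The target is $w(T)>\omega$.

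For each $\alpha$ the countable family $\mathcal{A}_\alpha$ cannot be a base of the tail $(\delta_\alpha,\omega_1)$ (of uncountable weight by hypothesis), so there are $\eta\in(\delta_\alpha,\omega_1)$ and an open $U\ni\eta$ with $A\not\subset U$ for every $A\in\mathcal{A}_\alpha$ containing $\eta$; by elementarity of $M_{\alpha+1}$ (noting $\delta_\alpha,\mathcal{A}_\alpha\in M_{\alpha+1}$) we take $\eta=\eta_\alpha\in(\delta_\alpha,\delta_{\alpha+1})\subset T$ and $U=U_\alpha\in M_{\alpha+1}$. Assuming toward contradiction that $T$ has a countable base $\mathcal{B}$, write each $B\in\mathcal{B}$ as $V_B\cap T$ for chosen $V_B\in\tau$. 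The union $N=\bigcup_\alpha M_\alpha$ is a size-$\aleph_1$ elementary submodel of $H_\theta$ that contains $T$, so by elementarity we may take $\mathcal{B}$ and $\{V_B\}$ in $N$; as $\mathcal{B}$ is a single element of $N$ it lies in some $M_{\alpha_0}$, whence $\{V_B\}\subset\mathcal{A}_{\alpha_0}$. For $\alpha>\alpha_0$, the base property of $\mathcal{B}$ at $\eta_\alpha\in T$ relative to the open $U_\alpha\cap T$ produces some $B$ with $\eta_\alpha\in V_B\in\mathcal{A}_{\alpha_0}\subset\mathcal{A}_\alpha$ and $V_B\cap T\subset U_\alpha$.

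The main obstacle is upgrading $V_B\cap T\subset U_\alpha$ to $V_B\subset U_\alpha$, which would directly contradict the witness property of $(\eta_\alpha,U_\alpha)$. The $\tau$-density of $D\subset T$ only gives $V_B\subset\overline{U_\alpha}$, and without regularity the boundary $\overline{U_\alpha}\setminus U_\alpha$ is not controllable. I anticipate the resolution to require strengthening the witness step: one arranges $(\eta_\alpha,U_\alpha)$ so that even the trace family $\{A\cap T:A\in\mathcal{A}_\alpha\}$ fails to be a local base of $T$ at $\eta_\alpha$ relative to $U_\alpha\cap T$, i.e., $A\cap T\not\subset U_\alpha$ for every $A\in\mathcal{A}_\alpha$ containing $\eta_\alpha$. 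Such a $T$-relative witness then directly clashes with $V_B\cap T\subset U_\alpha$ and $\eta_\alpha\in V_B\in\mathcal{A}_\alpha$. Establishing the existence of such a robust $T$-witness at some $\eta_\alpha\in(\delta_\alpha,\delta_{\alpha+1})$ is where the interplay of the tail hypothesis, hereditary separability, and the density of $D$ must be used most delicately, and where the proof's ``considerably harder'' character resides.
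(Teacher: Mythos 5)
Your preliminary reductions and your diagnosis of where regularity was used in Theorem \ref{regthm} are correct, but the proposal stops exactly where the real proof has to begin: you explicitly ``anticipate'' a strengthened witness rather than produce one, and no such witness is available for your setup. Demanding that for some $\eta_\alpha\in(\delta_\alpha,\delta_{\alpha+1})$ there be a $U_\alpha$ with $A\cap T\not\subset U_\alpha$ for every $A\in\mathcal A_\alpha$ containing $\eta_\alpha$ is essentially the assertion that your single, pre-chosen non-stationary set $T$ has uncountable weight --- which is the conclusion you are after, and there is no reason one $T$ built from one elementary chain should work. The paper's proof is organized precisely to avoid committing to a single candidate. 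It splits on whether a \emph{bad point} exists: a point $\alpha$ such that for every countable family $\mathcal N$ of its neighborhoods some non-stationary $A\ni\alpha$ has $\mathcal N\upharpoonright A$ failing to be a local base at $\alpha$ in $A$. If no bad point exists, a transfinite recursion driven by the tail hypothesis glues countable witnessing sets $I_\xi$ into a non-stationary $A$ on which the union of the chosen countable local bases is a base with no countable subfamily that is a base.

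If a bad point exists, the argument ranges over the whole family of non-stationary sets $A_C=\delta_C\cup(\omega_1\setminus C)$ indexed by cub sets $C$, assumes each has a countable base $\mathcal U_C$, and derives a contradiction using two devices absent from your outline. First, the substitute for regularity is hereditary separability applied to \emph{complements}: each open $U$ is coded by a countable $H(U)\subset\omega_1\setminus U$ dense in $\omega_1\setminus U$, so that $U=\omega_1\setminus\overline{H(U)}$ and $B\cap H(U)=\emptyset$ already forces $B\subset U$. This is exactly the upgrade from ``$V_B\cap T\subset U_\alpha$'' to ``$V_B\subset U_\alpha$'' that you could not make; it is achieved by confining all the relevant $H(U)$ below a fixed ordinal $\zeta$ and testing with basic $B$'s satisfying $B\cap\zeta\subset U\cap\zeta$. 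Second, a diagonal-intersection lemma on the club filter (Lemma \ref{diagonal}) stabilizes, at a single $\zeta$, the ordinal $f(C)$ bounding $\delta_C$ and all $H(U)$ for $U\in\mathcal U_C$, over a family of cubs that is cofinal mod countable sets; combined with Lemma \ref{getseq}, which uses badness of $\alpha$ to produce countable sets $I_\eta$ far beyond $\zeta$ meeting every basic neighborhood of $\alpha$ while $\alpha\notin\overline{I_\eta}$, one obtains a non-stationary $A$ and a cub $C$ with $f(C)=\zeta$ and $C\cap A$ countable, and the contradiction falls out. The bad-point dichotomy, the $H(U)$ coding, and the diagonalization over cubs are the entire engine of the proof, so the gap here is not a deferred technicality but the argument itself.
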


\begin{proof}
Just as in the proof of Theorem \ref{regthm},
we may then assume that all countable subsets have countable weight, moreover
that $\tau$ is hereditarily separable, because
any uncountable left separated
subspace  has an uncountable non-stationary subset, which even has uncountable net weight.

\begin{definition} \label{df:bad}
We say that $\alpha\in\omega_1$ is a {\em bad point}
if for every countable collection $\mathcal N\subset \tau$ of neighborhoods of $\alpha$
there is a
non-stationary subset $\alpha\in  A\subset  \omega_1 $ such that
   $\mathcal N\upharpoonright A = \{ U\cap   A
   : U\in \mathcal N\}$ is not a local
   basis at $\alpha$ in the subspace
    $A $.
\end{definition}

Clearly, any bad point must have uncountable $\tau$-character.
We shall use $NS(\omega_1)$ to denote the family of all non-stationary subsets of $\omega_1$.

\begin{lemma}\label{badp}
Assume that no no bad points exist. Then then there is a non-stationary set of uncountable weight.
\end{lemma}

\begin{proof}
By our assumption we may assign to every $\alpha\in \omega_1$
a countable collection of neighborhoods $\mathcal N_\alpha$ such that
$\mathcal N_\alpha \upharpoonright A$ is a local base at $\alpha$ in $A$
whenever $\alpha \in  A \in NS(\omega_1)$.

For any countable subset $\mathcal U \subset \tau$ and for any $\gamma < \omega_1$ we
know that $\mathcal U \upharpoonright [\gamma, \omega_1)$  is not a basis, hence
there is a countable subset $I \subset  [\gamma, \omega_1)$ of limit order type that witnesses this
in the following way: There are $\alpha \in I$ and a neighborhood  $V$ of $\alpha$
so that $I \cap V \setminus U \ne \emptyset$ for all $U \in \mathcal U $.
We denote by $\mathcal W(\mathcal U, \gamma ) $ the set of all such witnesses.

We will produce a non-stationary
set $A$
 that will not
 have a countable basis, contradicting our original assumption.

To achieve this, we define by recursion on $\xi < \omega_1$ countable sets
$I_\xi$ and ordinals $\gamma_\xi$ such that the sequence $\langle\gamma_\xi :  \xi < \omega_1\rangle$
is strictly increasing and continuous, and $\bigcup \{I_\xi : \xi < \eta\} \subset \gamma_\eta$
for all $\eta < \omega_1$.

To begin with, we choose $I_0 \in [\omega_1]^\omega$ of limit order type arbitrarily,
and then set $\gamma_0 = \sup I_0$l.
If $\{I_\xi : \xi < \eta\}$ and $\{\gamma_\xi : \xi < \eta\}$ have been so defined,
then we let $A_\eta = \bigcup \{I_\xi : \xi < \eta\}$ and $\gamma_\eta = \sup A_\eta$, moreover
$\mathcal U _\eta = \bigcup \{\mathcal N _\alpha : \alpha \in A_\eta\}$.
Then we choose $I_{\eta} \in \mathcal W(\mathcal U_\eta, \gamma_\eta ) $.

Clearly, this recursion goes through for all $\eta < \omega_1$, moreover
$A = \bigcup \{I_\xi : \xi < \omega_1\}$ is non-stationary, for it is disjoint
from the cub set $\{\gamma_\xi : \xi < \omega_1\}$. But then for
$\mathcal U  = \bigcup \{\mathcal N _\alpha : \alpha \in A\}$
we have that $\mathcal U \upharpoonright A$ is a basis of $A$, while by our construction
no countable subset of it is, hence $A$ has uncountable weight.
\end{proof}

In view of this we may assume that bad points exist, hence the following
lemma makes sense.

\begin{lemma}\label{getseq}
If $\alpha$ is a bad point
and $\mathcal U \subset\tau$ is any countable collection
of neighborhoods of $\alpha$
then for every $\alpha < \gamma < \omega_1$ there is a countable subset $I$ of the
final segment  $[\gamma, \omega_1)$ such that $\alpha \notin \overline{I}$
but $U \cap I \ne \emptyset$ for all $U \in \mathcal U$.
\end{lemma}

\begin{proof}
Since we know that $ \gamma = [0,\gamma)$ has countable weight,
by enlarging $\mathcal U$ if necessary, we may assume
that $\mathcal U\upharpoonright \gamma$
is a local basis at $\alpha$ in the countable subspace $\gamma$.
By definition, as $\alpha$ is a bad point, there is a
non-stationary set $A$ with $\alpha\in  A$ such that
$\mathcal U\upharpoonright A$ is not a local
basis at $\alpha$ in the subspace $A $.

If $A$ has uncountable weight, the proof of
Theorem \ref{one} is finished. So we have $\chi(\alpha, A) = w(A) = \omega$, hence
we may choose $\mathcal V= \{V_n : n < \omega\} \subset \tau$
such that $\mathcal V\upharpoonright A= \{A \cap V_n : n < \omega\}$ is a decreasing
local base at $\alpha$ in $A$.

On the other hand, $\mathcal U\upharpoonright A$ is not such a local
base, hence there is an open neighborhood $W$ of $\alpha$ for which
$A \cap V_n \setminus W \ne \emptyset$ for all $n < \omega$.
Note that for some $n < \omega$ we have $\gamma \cap V_n \subset \gamma \cap W$,
so we actually have
$(A \setminus \gamma) \cap V_n \setminus W \ne \emptyset$ for all $n < \omega$.

Let us now choose $\xi_n \in (A \setminus \gamma) \cap V_n \setminus W$ for all $n < \omega$
and set $I = \{\xi_n : n < \omega\} \subset \omega_1 \setminus \gamma$. Then $W \cap I = \emptyset$ implies
$\alpha \notin \overline{I}$, and clearly, if $U \in \mathcal U$ then $\xi_n \in U \cap I$ for all but finitely many $n < \omega$.
\end{proof}

In the remaining part of our proof we shall make heavy use of the cub subsets of $\omega_1$.
We denote by $\mathcal C$ the family of all cub subsets of $\omega_1$,
moreover we make use of
the following standard fact.

\begin{lemma} Let $f$ be any function from \label{diagonal}
$\mathcal C$ into $\omega_1$. Then there
is a $\gamma\in\omega_1$ such that for any  $D \in \mathcal C$ there is
$C \in \mathcal C$ with  $f(C) = \gamma$ and $|C \setminus D| \le \omega$.
\end{lemma}

\begin{proof} Suppose that, for each $\delta < \omega_1$, there
is a cub $C_\delta$ satisfying that no member
of $f^{-1}( \delta)$ is contained, mod countable,
in $C_\delta$.  Let $\bar C$ denote the diagonal
intersection $\{ \gamma\in\omega_1 :
   \delta < \gamma \Rightarrow \gamma\in C_\delta\}$. Then $\bar C \in \mathcal C$,
so we let $f( \bar C) = \bar \delta$.
But $\bar C\setminus C_{\bar \delta}$ is countable,
contradicting our choice of $C_{\bar \delta}$.
\end{proof}

Next we choose for each $C \in \mathcal C$ a countable elementary submodel
$M_C$ of a suitable $H_\vartheta$ such that $C, \tau \in M_C$ and
then set $\delta_C = M \cap \omega_1$.

Also, we recall that our topology $\tau$ is hereditarily separable.
Thus for every open set $U \in \tau$ we may fix a countable subset $H(U)$ of its complement
$\omega_1 \setminus U$ that is dense in it. Note that $H(U)$
determines $U$ because $U = \omega_1 \setminus  \overline{H(U)}$.

We claim that there is $C \in \mathcal C$ such that
the non-stationary set $A_C = \delta_C\cup (\omega_1\setminus C)$
has uncountable weight, thus completing our proof.

Assume, on the contrary, that for every $C \in \mathcal C$
we may choose a countable family $\mathcal U_C\subset \tau$
so that it traces a countable base for the subspace topology
on $ A_C$. We shall show that this leads to a contradiction.

To arrive at a contradiction, we now define a function $f$ from $\mathcal C$ into
$\omega_1$.
Given $C \in \mathcal C$ we can choose $f(C) < \omega_1$ so that
$\delta_C \le f(C)$ and $H(U) \subset f(C)$ for all $U \in \mathcal U_C$.

\medskip

Applying Lemma \ref{diagonal} then we may
fix $\zeta\in\omega_1$ so that
for any  $D \in \mathcal C$ there is
$C \in \mathcal C$ with  $f(C) = \zeta$ and $|C \setminus D| \le \omega$.
We then let $\delta = \min \{\delta_C : C \in C_\zeta = f^{-1}(\zeta)\}$
 and choose
$C_1\in\mathcal C_\zeta$ so that $M_{C_1}\cap \omega_1=\delta$.

Notice that
since $\tau\in M_{C_1}$, by elementarity, the non-empty
set of bad points is an
element of $M_{C_1}$.
Therefore we may fix a bad point $\alpha < \delta$.
We also note that $  \delta \leq
\zeta$.

We can also fix a countable family $\mathcal B\subset\tau$
that traces a countable base for the subspace topology
on $\zeta$.

Since $\alpha < \delta \le \zeta$, we may then apply Lemma \ref{getseq}
 to choose a countable set $I_0\subset \omega_1\setminus \zeta$ so
 that $\alpha \notin \overline{I_0}$ but
$B \cap I_0 \ne \emptyset$ whenever
 that $\alpha\in B \in \mathcal B$. We continue with an
 $\omega_1$-length recursion as follows.
If we have
 chosen $\{I_\xi : \xi < \eta\}$, then we set
  $\gamma_\eta = \min \{\gamma : \bigcup\{I_\xi : \xi <\eta\} \subset \gamma\}$.
We then choose, by Lemma \ref{getseq},
  a countable set $I_\eta \subset (\gamma_\eta, \omega_1) $
  so that again $\alpha \notin \overline{I_\eta}$ while $B \cap I_\eta \ne \emptyset$ whenever
$\alpha\in B \in \mathcal B$.

 Let  $A=\bigcup\{ I_\xi : \xi<\omega_1\}$
 and note that $A$ is disjoint from the cub set
  $\Gamma = \{ \gamma_\eta : \eta\in\omega_1\}$.
 Therefore, there is a
  cub set $C\in \mathcal C_\zeta$
  such that $C\cap A$ is countable. So, we may
  choose $\eta$ large enough so that $I_\eta \subset \omega_1 \setminus C$.
  Recall that  $\mathcal U_C\subset \tau$
traces a base for the subspace topology  on
  $A_C\supset \delta \cup (\omega_1\setminus C)$.
  We can therefore choose  $U \in \mathcal U_C$ so
  that $\alpha\in U$ and $U \cap I_\eta = \emptyset$ because $\alpha \notin \overline{I_\eta}$.
 It follows that $I_\eta$ is a subset of the closure of
 $H(U)$, while $f(C) = \zeta$ implies that $H(U) \subset \zeta$.

 Now we may choose $B \in \mathcal B$ so that $\alpha\in B$
 and $B\cap \zeta \subset U \cap \zeta$, and so we have
 $B \cap H(U) = \emptyset$. But this clearly implies $B \cap \overline{H(U)} = \emptyset$
 as well, hence $B \subset U$. This, however, is impossible because, by our construction,
 $B \cap I_\eta \ne \emptyset$ while $U \cap I_\eta = \emptyset$,
 providing us with the desired contradiction.

\end{proof}

{\bf Acknowledgement} The research on this paper was started when the second author visited
the Department of Mathematics and Statistics of UNC Charlotte. He would like to thank this
institution as well as the NKFIH grant no. 149211 for financially supporting this visit.

\begin{bibdiv}

\begin{biblist}

\bib{DH2014}{article}{
   author={Dow, Alan},
   author={Hart, Klaas Pieter},
   title={Reflecting Lindel\"of and converging $\omega_1$-sequences},
   journal={Fund. Math.},
   volume={224},
   date={2014},
   number={3},
   pages={205--218},
   issn={0016-2736},
   review={\MR{3194414}},
   doi={10.4064/fm224-3-1},
}

\bib{ErdosSpecker}{article}{
   author={Erd\H os, P.},
   author={Specker, E.},
   title={On a theorem in the theory of relations and a solution of a
   problem of Knaster},
   journal={Colloq. Math.},
   volume={8},
   date={1961},
   pages={19--21},
   issn={0010-1354},
   review={\MR{0130182}},
   doi={10.4064/cm-8-1-19-21},
}

\bib{HKsup}{article}{
   author={Hart, Joan E.},
   author={Kunen, Kenneth},
   title={Super properties and net weight},
   journal={Topology Appl.},
   volume={274},
   date={2020},
   pages={107144, 19},
   issn={0166-8641},
   review={\MR{4073574}},
   doi={10.1016/j.topol.2020.107144},
}

 \bib{HKsupdup}{article}{
   author={Hart, Joan E.},
   author={Kunen, Kenneth},
   title={Superduper properties},
   journal={Topology Proc.},
   volume={60},
   date={2022},
   pages={261--274},
   issn={0146-4124},
   review={\MR{4478135}},
}

\bib{K1}{book}{
   author={Kunen, Kenneth},
   title={Set theory},
   series={Studies in Logic and the Foundations of Mathematics},
   volume={102},
   note={An introduction to independence proofs},
   publisher={North-Holland Publishing Co., Amsterdam-New York},
   date={1980},
   pages={xvi+313},
   isbn={0-444-85401-0},
   review={\MR{0597342}},
}

\bib{JSSz}{article}{
author={I.~Juh{\'a}sz},
author={L.~Soukup},
author={Z.~Szentmikl{\'o}ssy},
title = { The class $C(\omega_1)$ and countable net weight},
journal = {Top. Appl.},
note = {Available online },
doi= {10.1016/j.topol.2024.109106}}

\bib{Tkachenko78}{article}{
   author={Tka\v cenko, M. G.},
   title={Chains and cardinals},
   language={Russian},
   journal={Dokl. Akad. Nauk SSSR},
   volume={239},
   date={1978},
   number={3},
   pages={546--549},
   issn={0002-3264},
   review={\MR{0500798}},
}
\end{biblist}

\end{bibdiv}

\end{document}